\newtheorem{theorem}{Theorem}
\theoremstyle{plain}
\newtheorem{corollary}{Corollary}
\newtheorem{remark}{Remark}
\numberwithin{equation}{section}
\begin{document}
\title{The $\mathcal{W}^{\ast }-$curvature tensor on relativistic space-times%
}
\author{H. M. Abu-Donia}
\address{Department of Mathematics, Faculty of Science, Zagazig \\
University, Egypt,}
\email{donia\_1000@yahoo.com}
\author{Sameh Shenawy}
\address[S. Shenawy]{Basic Science Department, Modern Academy for
Engineering and Technology, Maadi, Egypt,}
\email{drshenawy@mail.com}
\author{Abdullah A. Syied}
\address[A. A. Syied]{ Department of Mathematics, Faculty of Science,
Zagazig \\
University, Egypt,}
\email{a.a\_syied@yahoo.com}
\subjclass[2010]{Primary 53C25, 53C50; Secondary 53C80, 53B20}
\keywords{Einstein's field equation, Perfect fluid space-times,
Energy-momentum tensor, semi-symmetric curvature tensor.}

\begin{abstract}
This paper aims to study the $\mathcal{W}^{\ast }-$curvature tensor on
relativistic space-times. The energy-momentum tensor $T$ of a space-time is
semi-symmetric given that the $\mathcal{W}^{\star }-$curvature tensor is
semi-symmetric whereas energy-momentum tensor $T$ of a space-time having a
divergence free $\mathcal{W}^{\star }-$curvature tensor is of Codazzi type.
A space-time having a traceless $\mathcal{W}^{\ast }-$curvature tensor is
Einstein. A $\mathcal{W}^{\ast }-$curvature flat space-time is Einstein.
Perfect fluid space-times which admits $\mathcal{W}^{\star }-$curvature
tensor are considered.
\end{abstract}

\maketitle

\section{Introduction}

In \cite%
{Pokhariyal:1970,Pokhariyal:1971,Pokhariyal:1972,Pokhariyal:1982,Pokhariyal:2001}%
, the authors introduced some curvature tensors similar to the projective
curvature tensor\cite{Mishra:1984}. They investigated their geometrical
properties and physical significance. These tensors have been recently
studied in different ambient spaces \cite%
{Ahsan:2017,Mallick:2014,Mallick:2016,Shenawy:2016,sach:1977,Taleshian:2010,Ozen:2011}%
. However, we noticed that a little attention is paid to the $\mathcal{W}%
_{3}^{\ast }-$curvature tensor. This tensor is a $\left( 0,4\right) $ tensor
defined as%
\begin{equation*}
\mathcal{W}_{3}^{\star }\left( U,V,Z,T\right) =R\left( U,V,Z,T\right) -\frac{%
1}{n-1}\left[ g\left( V,Z\right) \mathrm{Ric}\left( U,T\right) -g\left(
V,T\right) \mathrm{Ric}\left( U,Z\right) \right] ,
\end{equation*}%
where $R\left( U,V,Z,T\right) =g(R\left( \left( U,V\right) Z,T\right) $, $%
R\left( U,V\right) Z=\nabla _{U}\nabla _{V}-\nabla _{U}\nabla _{V}-\nabla
_{\lbrack U,V]}Z$ is the Riemann curvature tensor, $\nabla $ is the
Levi-Civita connection, and $\mathrm{Ric}\left( U,V\right) $ is Ricci
tensor. For the simplicity, we will denote $\mathcal{W}_{3}^{\star }$ by $%
\mathcal{W}^{\star }$. In the local coordinates, it is%
\begin{equation}
\mathcal{W}_{ijkl}^{\star }=R_{ijkl}-\frac{1}{n-1}\left[
g_{jk}R_{il}-g_{jl}R_{ik}\right] .  \label{F2}
\end{equation}%
The $\mathcal{W}^{\ast }-$curvature tensor does not have neither symmetry
nor cyclic properties.

A semi-Riemannian manifold $M$ is semi-symmetric \cite{Szabo:2016} if%
\begin{equation*}
R\left( \zeta ,\xi \right) \cdot R=0,
\end{equation*}%
where $R\left( \zeta ,\xi \right) $ acts as a derivation on $R$. $M$ is
Ricci semi-symmetric \cite{Mirzoyan:1991} if%
\begin{equation*}
R\left( \zeta ,\xi \right) \cdot \mathrm{Ric}=0,
\end{equation*}%
where $R\left( \zeta ,\xi \right) $ acts as a derivation on $\mathrm{Ric}$.
A semi-symmetric manifold is known to be Ricci semi-symmetric as well. The
converse does not generally hold. On the same line of the above definitions
we say that $M$ has a semi-symmetric $\mathcal{W}^{\star }-$curvature tensor
if%
\begin{equation*}
R\left( \zeta ,\xi \right) \cdot \mathcal{W}^{\star }=0,
\end{equation*}%
where $R\left( \zeta ,\xi \right) $ acts as a derivation on $\mathcal{W}%
^{\star }$.

This study was designed to fill this observed gap. The relativistic
significance of the $\mathcal{W}^{\star }-$curvature tensor is investigated.
First, it is shown that space-times with semi-symmetric $\mathcal{W}%
_{jk}^{\star }=g^{il}\mathcal{W}_{ijkl}^{\star }$ tensor have Ricci
semi-symmetric tensor and consequently the energy-momentum tensor is
semi-symmetric. The divergence of the $\mathcal{W}^{\star }-$curvature
tensor is considered and it is proved that the energy-momentum tensor $T$ of
a space-time $M$ is of Codazzi type if $M$ has a divergence free $\mathcal{W}%
^{\star }-$curvature tensor. If $M$ admits a parallel $\mathcal{W}^{\star }-$%
curvature tensor, then $T$ is a parallel. Finally, a $\mathcal{W}^{\star }-$%
flat perfect fluid space-time performs as a cosmological constant. A dust
fluid\emph{\ }$\mathcal{W}^{\star }-$flat space-time satisfies Einstein's
field equation is a vacuum space.

\section{$\mathcal{W}^{\star }-$semi-symmetric space-times}

A $4-$dimensional relativistic space-time $M$ is said to have a
semi-symmetric $\mathcal{W}^{\star }-$curvature tensor if%
\begin{equation*}
R\left( \zeta ,\xi \right) \cdot \mathcal{W}^{\star }=0,
\end{equation*}%
where $R\left( \zeta ,\xi \right) $ acts as a derivation on the tensor $%
\mathcal{W}^{\star }$. In local coordinates, one gets

\begin{eqnarray}
\left( \nabla _{\mu }\nabla _{\nu }-\nabla _{\nu }\nabla _{\mu }\right) 
\mathcal{W}_{ijkl}^{\star } &=&\left( \nabla _{\mu }\nabla _{\nu }-\nabla
_{\nu }\nabla _{\mu }\right) R_{ijkl}-\frac{1}{3}[g_{jk}\left( \nabla _{\mu
}\nabla _{\nu }-\nabla _{\nu }\nabla _{\mu }\right) R_{il}  \notag \\
&&-g_{jl}\left( \nabla _{\mu }\nabla _{\nu }-\nabla _{\nu }\nabla _{\mu
}\right) R_{ik}.  \label{1F}
\end{eqnarray}%
Contracting both sides with $g^{il}$ yields%
\begin{equation}
\left( \nabla _{\mu }\nabla _{\nu }-\nabla _{\nu }\nabla _{\mu }\right) 
\mathcal{W}_{jk}^{\star }=\frac{4}{3}\left( \nabla _{\mu }\nabla _{\nu
}-\nabla _{\nu }\nabla _{\mu }\right) R_{jk},
\end{equation}%
where $\mathcal{W}_{jk}^{\star }=g^{il}\mathcal{W}_{ijkl}^{\star }$. Thus we
have the following theorem.

\begin{theorem}
$M$ is Ricci semi-symmetric if and only if $\mathcal{W}_{jk}^{\star }=g^{il}%
\mathcal{W}_{ijkl}^{\star }$ is semi-symmetric.
\end{theorem}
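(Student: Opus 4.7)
The plan is to use the contracted identity displayed in equation (2.2) as the engine of the proof; once that identity is in hand, both directions of the biconditional are essentially immediate. Concretely, I would start from the definition of the $\mathcal{W}^{\star }$-curvature tensor in equation (\ref{F2}), apply the commutator of covariant derivatives $\left( \nabla _{\mu }\nabla _{\nu }-\nabla _{\nu }\nabla _{\mu }\right)$ term by term (as in equation (\ref{1F})), and then contract with $g^{il}$. Because $\nabla g = 0$, the metric factors pass freely through the commutator.

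The key computational step is the contraction. Contracting $\mathcal{W}_{ijkl}^{\star }$ with $g^{il}$ gives $\mathcal{W}_{jk}^{\star } = \tfrac{4}{3}R_{jk} - \tfrac{1}{3}g_{jk}R$ in dimension $n=4$, since $g^{il}R_{ijkl}=R_{jk}$, $g^{il}g_{jk}R_{il}=g_{jk}R$, and $g^{il}g_{jl}R_{ik}=R_{jk}$. Applying the commutator and using that $(\nabla_\mu \nabla_\nu - \nabla_\nu \nabla_\mu)R = 0$ for the scalar $R$ (together with $\nabla g = 0$), the scalar-curvature term drops out and one recovers exactly the displayed identity
\begin{equation*}
\left( \nabla _{\mu }\nabla _{\nu }-\nabla _{\nu }\nabla _{\mu }\right) \mathcal{W}_{jk}^{\star }=\frac{4}{3}\left( \nabla _{\mu }\nabla _{\nu }-\nabla _{\nu }\nabla _{\mu }\right) R_{jk}.
\end{equation*}

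From this identity the equivalence is clear. Translating $R(\zeta,\xi)\cdot \mathcal{W}^{\star}=0$ and $R(\zeta,\xi)\cdot \mathrm{Ric}=0$ into the tensorial Ricci-identity form $[\nabla_\mu,\nabla_\nu]\mathcal{W}_{jk}^{\star}=0$ and $[\nabla_\mu,\nabla_\nu]R_{jk}=0$ respectively, the vanishing of either side of the displayed relation forces the vanishing of the other (the coefficient $\tfrac{4}{3}$ is nonzero). This yields both implications of the biconditional simultaneously.

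The only real obstacle is a bookkeeping one: making sure the contraction in the second term of $\mathcal{W}^{\star}_{ijkl}$ is done carefully so that the $\tfrac{4}{3}$ coefficient appears correctly, and justifying the disappearance of the $g_{jk}R$ term by invoking metric-compatibility and the fact that the commutator of covariant derivatives annihilates scalars. No further geometric input (Bianchi identities, Einstein condition, etc.) is needed.
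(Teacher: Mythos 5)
Your proposal is correct and follows essentially the same route as the paper: apply the commutator $\left(\nabla_{\mu}\nabla_{\nu}-\nabla_{\nu}\nabla_{\mu}\right)$ to the defining formula for $\mathcal{W}^{\star}_{ijkl}$, contract with $g^{il}$ using $\nabla g=0$ and the fact that the commutator annihilates the scalar $R$, and obtain $\left(\nabla_{\mu}\nabla_{\nu}-\nabla_{\nu}\nabla_{\mu}\right)\mathcal{W}^{\star}_{jk}=\frac{4}{3}\left(\nabla_{\mu}\nabla_{\nu}-\nabla_{\nu}\nabla_{\mu}\right)R_{jk}$, from which both directions follow since $\frac{4}{3}\neq 0$. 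Your coefficient bookkeeping matches the paper's equation (2.2) exactly.
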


The following result is a direct consequence of this theorem.

\begin{corollary}
$M$ is Ricci semi-symmetric if the $\mathcal{W}^{\ast }-$curvature is
semi-symmetric.
\end{corollary}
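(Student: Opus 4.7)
The plan is to deduce the corollary directly from the preceding theorem by showing that semi-symmetry of the full $(0,4)$ tensor $\mathcal{W}^{\star}_{ijkl}$ passes to its $g^{il}$-contraction $\mathcal{W}^{\star}_{jk}$. Once that reduction is in place, the ``if'' half of the theorem gives Ricci semi-symmetry with no further work.

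First I would write the hypothesis in local coordinates as
\begin{equation*}
\left(\nabla_{\mu}\nabla_{\nu}-\nabla_{\nu}\nabla_{\mu}\right)\mathcal{W}^{\star}_{ijkl}=0,
\end{equation*}
and then contract both sides with $g^{il}$. The key observation is that $R(\zeta,\xi)$ acts as a derivation on tensor fields and, because the Levi-Civita connection is metric-compatible ($\nabla g=0$), this derivation commutes with the metric contraction. Therefore $g^{il}$ slides through the commutator of covariant derivatives and the contracted identity becomes
\begin{equation*}
\left(\nabla_{\mu}\nabla_{\nu}-\nabla_{\nu}\nabla_{\mu}\right)\mathcal{W}^{\star}_{jk}=0,
\end{equation*}
which is precisely the statement that $\mathcal{W}^{\star}_{jk}$ is semi-symmetric.

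Having established semi-symmetry of $\mathcal{W}^{\star}_{jk}$, I would invoke the forward implication of the theorem to conclude that $M$ is Ricci semi-symmetric, completing the proof.

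I do not anticipate any real obstacle here; the only point worth stating carefully is the commutation of contraction with the derivation action $R(\zeta,\xi)\cdot$, which is a standard consequence of $\nabla g=0$. Note also that the converse is not claimed: semi-symmetry of the contracted tensor $\mathcal{W}^{\star}_{jk}$ does not force semi-symmetry of the full $\mathcal{W}^{\star}_{ijkl}$, so the corollary is genuinely one-directional, which is consistent with its one-way formulation.
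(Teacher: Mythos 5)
Your argument is correct and is exactly the route the paper intends: contract the semi-symmetry identity for the full $(0,4)$ tensor with $g^{il}$, use $\nabla g=0$ to pass the metric through the commutator of covariant derivatives so that $\mathcal{W}^{\star}_{jk}=g^{il}\mathcal{W}^{\star}_{ijkl}$ is itself semi-symmetric, and then invoke the preceding theorem. Your closing remark on the one-directionality is also consistent with how the paper states the corollary.
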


A space-time manifold is conformally semi-symmetric if the conformal
curvature tensor $\mathcal{C}$ is semi-symmetric.

\begin{theorem}
Assume that $M$ is a space-time admitting a semi-symmetric $\mathcal{W}%
_{jk}^{\star }=g^{il}\mathcal{W}_{ijkl}^{\star }$. Then, $M$ is conformally
semi-symmetric if and only if it is semi-symmetric i.e. $\nabla _{\lbrack
\mu }\nabla _{\nu ]}R_{ijkl}=0\Leftrightarrow \nabla _{\lbrack \mu }\nabla
_{\nu ]}\mathcal{C}_{ijkl}=0$.
\end{theorem}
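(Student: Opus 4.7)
The plan is to show that, under the stated hypothesis, the Weyl conformal tensor $\mathcal{C}$ and the Riemann tensor $R$ have identical second-derivative commutators, so that semi-symmetry of the one coincides with semi-symmetry of the other. The pivotal ingredient is the explicit formula for the Weyl tensor in dimension $n=4$,
\[
\mathcal{C}_{ijkl} = R_{ijkl} - \tfrac{1}{2}\bigl(g_{ik}R_{jl} - g_{il}R_{jk} + g_{jl}R_{ik} - g_{jk}R_{il}\bigr) + \tfrac{R}{6}\bigl(g_{ik}g_{jl} - g_{il}g_{jk}\bigr).
\]

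First, I would apply the commutator $\nabla_{[\mu}\nabla_{\nu]}$ to both sides of this identity. Because the metric is parallel, every $g$-factor commutes through the operator, and the difference $\nabla_{[\mu}\nabla_{\nu]}R_{ijkl} - \nabla_{[\mu}\nabla_{\nu]}\mathcal{C}_{ijkl}$ collapses to a linear combination, with metric-valued coefficients, of the four Ricci commutators $\nabla_{[\mu}\nabla_{\nu]}R_{pq}$ and the scalar commutator $\nabla_{[\mu}\nabla_{\nu]}R$.

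Next I would invoke the hypothesis. By Theorem 1 above, semi-symmetry of $\mathcal{W}^{\star}_{jk}$ is equivalent to Ricci semi-symmetry of $M$, so $\nabla_{[\mu}\nabla_{\nu]}R_{jk}=0$. Contracting this identity with $g^{jk}$ yields $\nabla_{[\mu}\nabla_{\nu]}R=0$. Substituting both facts into the expression from the previous step makes every Weyl-correction term vanish, and one obtains
\[
\nabla_{[\mu}\nabla_{\nu]}R_{ijkl} = \nabla_{[\mu}\nabla_{\nu]}\mathcal{C}_{ijkl},
\]
from which the claimed equivalence follows in both directions at once.

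The main obstacle is little more than careful bookkeeping: one must verify that both the scalar commutator $\nabla_{[\mu}\nabla_{\nu]}R$ and each of the four Ricci commutators genuinely lie in the kernel supplied by Theorem 1, which is precisely what the contraction with $g^{jk}$ guarantees. No geometric input beyond Theorem 1 and $\nabla g=0$ is needed.
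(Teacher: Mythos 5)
Your proposal is correct and is precisely the argument the paper intends (the paper states this theorem without writing out a proof): the Weyl decomposition in dimension $4$ differs from $R_{ijkl}$ only by metric-coefficient multiples of $\mathrm{Ric}$ and $R$, the hypothesis together with the paper's Theorem~1 kills $\nabla_{[\mu}\nabla_{\nu]}R_{jk}$, and contraction (or simply the fact that second covariant derivatives commute on scalars) kills $\nabla_{[\mu}\nabla_{\nu]}R$, giving $\nabla_{[\mu}\nabla_{\nu]}R_{ijkl}=\nabla_{[\mu}\nabla_{\nu]}\mathcal{C}_{ijkl}$. No gaps; your write-up in fact supplies details the paper leaves implicit.
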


The Einstein's field equation is%
\begin{equation}
R_{ij}-\frac{1}{2}g_{ij}R+g_{ij}\Lambda =kT_{ij},  \label{4-F}
\end{equation}%
where $\Lambda ,R,k$ are the cosmological constant, the scalar curvature,
and the gravitational constant. Then%
\begin{equation}
\left( \nabla _{\mu }\nabla _{\nu }-\nabla _{\nu }\nabla _{\mu }\right)
R_{ij}=k\left( \nabla _{\mu }\nabla _{\nu }-\nabla _{\nu }\nabla _{\mu
}\right) T_{ij},
\end{equation}%
i.e., $M$ is Ricci semi-symmetric if and only if the energy-momentum tensor
is semi-symmetric.

\begin{theorem}
The energy-momentum tensor of a space-time $M$ is semi-symmetric if and only
if $\mathcal{W}_{jk}^{\star }=g^{il}\mathcal{W}_{ijkl}^{\star }$ is
semi-symmetric.
\end{theorem}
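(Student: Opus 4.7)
My plan is to obtain the stated equivalence by chaining together two facts that are already assembled in the excerpt: the consequence of the Einstein field equation relating $R_{ij}$ to $T_{ij}$ under the commutator of two covariant derivatives, and Theorem 1 relating Ricci semi-symmetry to the semi-symmetry of $\mathcal{W}^{\star}_{jk}$.

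First, I would apply the operator $\nabla_{\mu}\nabla_{\nu}-\nabla_{\nu}\nabla_{\mu}$ to both sides of equation (\ref{4-F}). Since $\Lambda$ is a constant, $R$ is a scalar, and $\nabla g=0$, the contributions of $-\tfrac{1}{2}g_{ij}R$ and $g_{ij}\Lambda$ collapse to $g_{ij}(\nabla_{\mu}\nabla_{\nu}-\nabla_{\nu}\nabla_{\mu})R$, and the commutator of second covariant derivatives on a scalar function vanishes by torsion-freeness of $\nabla$. What remains is precisely the identity already displayed immediately after (\ref{4-F}),
\begin{equation*}
\left(\nabla_{\mu}\nabla_{\nu}-\nabla_{\nu}\nabla_{\mu}\right)R_{ij}
= k\left(\nabla_{\mu}\nabla_{\nu}-\nabla_{\nu}\nabla_{\mu}\right)T_{ij}.
\end{equation*}
Interpreting the commutator through the Ricci identity as the derivation action of $R(\zeta,\xi)$ on a $(0,2)$-tensor, and using $k\neq 0$, this is exactly the equivalence that $M$ is Ricci semi-symmetric if and only if $T$ is semi-symmetric.

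Second, I would invoke Theorem 1, which supplies the companion biconditional that $M$ is Ricci semi-symmetric if and only if $\mathcal{W}^{\star}_{jk}=g^{il}\mathcal{W}^{\star}_{ijkl}$ is semi-symmetric. Composing the two equivalences via Ricci semi-symmetry as the common hub yields the claim: $T$ is semi-symmetric if and only if $\mathcal{W}^{\star}_{jk}$ is semi-symmetric.

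There is no genuine obstacle in the argument; the only point requiring a brief check is that the scalar and constant pieces of (\ref{4-F}) are annihilated by the commutator of second covariant derivatives, which is routine given $\nabla g=0$ and the torsion-freeness of the Levi-Civita connection. Once that is noted, the theorem follows immediately as a corollary of Theorem 1 combined with the field equation.
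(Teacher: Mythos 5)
Your proposal is correct and follows essentially the same route as the paper: the paper likewise applies the commutator $\nabla_{\mu}\nabla_{\nu}-\nabla_{\nu}\nabla_{\mu}$ to Einstein's field equation (\ref{4-F}) to conclude that Ricci semi-symmetry is equivalent to semi-symmetry of $T$, and then combines this with Theorem 1. Your explicit check that the scalar and constant terms are annihilated by the commutator is a detail the paper leaves implicit, but it is the same argument.
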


\begin{remark}
A space-time $M$ with semi-symmetric energy-momentum tensor has been studied
by De and Velimirovic in \cite{De:2014}.
\end{remark}

It is clear that $\nabla _{\mu }\mathcal{W}_{ijkl}^{\star }=0$ implies $%
\left( \nabla _{\mu }\nabla _{\nu }-\nabla _{\nu }\nabla _{\mu }\right) 
\mathcal{W}_{ijkl}^{\star }=0$. Thus the following result rises.

\begin{corollary}
Let $M$ be a space-time having a covariantly constant $\mathcal{W}^{\ast }-$%
curvature tensor. Then $M$ is conformally semi-symmetric and the
energy-momentum tensor is semi-symmetric.
\end{corollary}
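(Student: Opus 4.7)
The plan is to propagate the parallelism hypothesis through the chain of equivalences already established in the excerpt. From $\nabla_\mu \mathcal{W}^{\star}_{ijkl}=0$ one has immediately, by skew-symmetrising in $\mu,\nu$, that $(\nabla_\mu\nabla_\nu-\nabla_\nu\nabla_\mu)\mathcal{W}^{\star}_{ijkl}=0$; that is, the full $(0,4)$ tensor $\mathcal{W}^{\star}$ is semi-symmetric in the sense of the paper.

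Next I would contract this identity with $g^{il}$ to conclude that the trace $\mathcal{W}^{\star}_{jk}=g^{il}\mathcal{W}^{\star}_{ijkl}$ is itself semi-symmetric. This is precisely the hypothesis of the theorem characterising semi-symmetry of the energy-momentum tensor via $\mathcal{W}^{\star}_{jk}$, whose ``if'' direction then yields at once that $T_{ij}$ is semi-symmetric. This settles the second half of the claim. (Equivalently, Theorem~1 gives Ricci semi-symmetry and Einstein's equation (\ref{4-F}) transfers it to $T_{ij}$.)

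For the conformal statement I would return to the identity (\ref{1F}). Its left-hand side vanishes by hypothesis; moreover, Ricci semi-symmetry, which has just been established, makes both Ricci commutators $(\nabla_\mu\nabla_\nu-\nabla_\nu\nabla_\mu)R_{il}$ and $(\nabla_\mu\nabla_\nu-\nabla_\nu\nabla_\mu)R_{ik}$ on the right-hand side vanish. What survives is $(\nabla_\mu\nabla_\nu-\nabla_\nu\nabla_\mu)R_{ijkl}=0$, i.e., $M$ is semi-symmetric. Since $\mathcal{W}^{\star}_{jk}$ is semi-symmetric, the earlier equivalence between semi-symmetry and conformal semi-symmetry under that hypothesis then promotes this to conformal semi-symmetry of $M$.

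I do not anticipate a genuine obstacle, since every step is a direct substitution into an identity already proved in the excerpt. The only care needed is to apply the equivalences in the correct order ($\mathcal{W}^{\star}$-parallel $\Rightarrow$ $\mathcal{W}^{\star}$ semi-symmetric $\Rightarrow$ $\mathcal{W}^{\star}_{jk}$ semi-symmetric $\Rightarrow$ Ricci and energy-momentum semi-symmetric $\Rightarrow$ Riemann semi-symmetric $\Rightarrow$ conformally semi-symmetric), and to keep the coefficient $\tfrac{1}{3}$ from (\ref{1F}) honest so that the cancellation against the Ricci commutators is exact and no spurious curvature condition is introduced.
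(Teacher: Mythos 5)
Your proposal is correct and follows essentially the same route as the paper: the observation that $\nabla_{\mu}\mathcal{W}^{\star}_{ijkl}=0$ forces the commutator $(\nabla_{\mu}\nabla_{\nu}-\nabla_{\nu}\nabla_{\mu})\mathcal{W}^{\star}_{ijkl}$ to vanish, after which the contraction with $g^{il}$ and the previously established theorems (semi-symmetry of $\mathcal{W}^{\star}_{jk}$ $\Leftrightarrow$ Ricci semi-symmetry $\Leftrightarrow$ semi-symmetry of $T$, together with the conformal equivalence) deliver both conclusions. Your write-up is merely more explicit than the paper's one-line justification, but the logical chain is identical.
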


A space-time is called Ricci recurrent if the Ricci curvature tensor
satisfies%
\begin{equation}
\nabla _{\mu }R_{ij}=b_{\mu }R_{ij},
\end{equation}%
where $b$ is called the associated recurrence $1-$form. Assume that the
Ricci tensor is recurrent, then%
\begin{eqnarray}
\left( \nabla _{\mu }\nabla _{\nu }-\nabla _{\nu }\nabla _{\mu }\right)
R_{ij} &=&\nabla _{\mu }\left( \nabla _{\nu }R_{ij}\right) -\nabla _{\nu
}\left( \nabla _{\mu }R_{ij}\right)  \notag \\
&=&\nabla _{\mu }\left( b_{\nu }R_{ij}\right) -\nabla _{\nu }\left( b_{\mu
}R_{ij}\right)  \notag \\
&=&\left( \nabla _{\mu }b_{\nu }\right) R_{ij}+b_{\nu }\nabla _{\mu
}R_{ij}-\left( \nabla _{\nu }b_{\mu }\right) R_{ij}-b_{\mu }\nabla _{\nu
}R_{ij}  \notag \\
&=&\left[ \nabla _{\mu }b_{\nu }-\nabla _{\nu }b_{\mu }\right] R_{ij}.
\end{eqnarray}

\begin{corollary}
The following conditions on a space-time $M$ are equivalent

\begin{enumerate}
\item The Ricci tensor is recurrent with closed recurrence one form,

\item $T$ is semi-symmetric, and

\item $\mathcal{W}_{jk}^{\star }=g^{il}\mathcal{W}_{ijkl}^{\star }$ is
semi-symmetric.
\end{enumerate}
\end{corollary}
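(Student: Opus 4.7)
The plan is to assemble the three equivalences from material already established in the excerpt. The middle equivalence (2)~$\Leftrightarrow$~(3) is exactly Theorem 3, so no additional work is needed there.

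For the implication (1)~$\Rightarrow$~(2), I would invoke the identity derived in the display immediately preceding the corollary,
\[
(\nabla_{\mu}\nabla_{\nu}-\nabla_{\nu}\nabla_{\mu})R_{ij} = (\nabla_{\mu}b_{\nu}-\nabla_{\nu}b_{\mu})R_{ij},
\]
which holds whenever the Ricci tensor is recurrent with associated one-form $b$. The extra hypothesis that $b$ is closed means precisely $\nabla_{\mu}b_{\nu} = \nabla_{\nu}b_{\mu}$, so the right-hand side vanishes and $M$ is Ricci semi-symmetric. Feeding this back into Einstein's field equation (\ref{4-F}), as was done just before Theorem 3, yields $(\nabla_{\mu}\nabla_{\nu}-\nabla_{\nu}\nabla_{\mu})T_{ij} = 0$, which is (2). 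Composing with Theorem 3 then also gives (3).

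The reverse direction --- obtaining (1) from (2) or (3) --- is the step I expect to be the main obstacle. Passing from Ricci semi-symmetry back to genuine recurrence $\nabla_{\mu}R_{ij} = b_{\mu}R_{ij}$ is not automatic, since semi-symmetry is in general strictly weaker than recurrence. I would therefore read the corollary primarily as the chain (1)~$\Rightarrow$~(2)~$\Leftrightarrow$~(3), with the closed-form hypothesis serving exactly to force the Ricci commutator to vanish and thereby trigger Theorem 3. If a genuine equivalence is intended, then the converse step would require an additional structural assumption on $M$ (for instance, that a nonvanishing Ricci eigenframe with a consistent eigenvalue function exist, so that a recurrence one-form can be reconstructed from the vanishing of the Ricci commutator); flagging this gap, or supplying such a hypothesis, is the only delicate point in writing out the proof.
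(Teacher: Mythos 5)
Your handling of (1) $\Rightarrow$ (2) $\Leftrightarrow$ (3) is exactly the paper's argument: the displayed identity
\[
\left( \nabla _{\mu }\nabla _{\nu }-\nabla _{\nu }\nabla _{\mu }\right) R_{ij}=\left[ \nabla _{\mu }b_{\nu }-\nabla _{\nu }b_{\mu }\right] R_{ij},
\]
combined with the earlier equivalence between Ricci semi-symmetry, semi-symmetry of $T$, and semi-symmetry of $\mathcal{W}_{jk}^{\star }$, is the entire content the paper supplies before stating the corollary. Your concern about the converse is also well founded: the paper gives no argument that semi-symmetry of $T$ forces the Ricci tensor to be recurrent, and that implication is false in general (Ricci semi-symmetry is a condition on the second-order commutator and does not by itself produce a recurrence one-form $b$ with $\nabla _{\mu }R_{ij}=b_{\mu }R_{ij}$). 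Even granting recurrence, the identity above only gives $\left( \nabla _{\mu }b_{\nu }-\nabla _{\nu }b_{\mu }\right) R_{ij}=0$, which yields closedness of $b$ only where the Ricci tensor is nonzero. So the corollary as literally stated is an overreach of the paper's own computation; reading it as the one-way chain (1) $\Rightarrow$ (2) $\Leftrightarrow$ (3), or as an equivalence only within the class of Ricci-recurrent space-times with nonvanishing Ricci tensor, is the correct repair, and flagging the gap rather than inventing a converse is the right call.
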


\section{Space-times admitting divergence free $\mathcal{W}^{\star }-$%
curvature tensor}

The tensor $\mathcal{W}_{jkl}^{\star h}$ of type $\left( 1,3\right) $ is
given by%
\begin{eqnarray*}
\mathcal{W}_{jkl}^{\star h} &=&g^{hi}\mathcal{W}_{ijkl}^{\star } \\
&=&R_{jkl}^{h}-\frac{1}{3}[g_{jk}R_{l}^{h}-g_{jl}R_{k}^{h}].
\end{eqnarray*}%
Consequently, one defines its divergence as%
\begin{eqnarray}
\nabla _{h}\mathcal{W}_{jkl}^{\star h} &=&\nabla _{h}R_{jkl}^{h}-\frac{1}{3}%
[g_{jk}\nabla _{h}R_{l}^{h}-g_{jl}\nabla _{h}R_{k}^{h}]  \notag \\
&=&\nabla _{h}R_{jkl}^{h}-\frac{1}{3}[g_{jk}\nabla _{l}R-g_{jl}\nabla _{k}R].
\label{F240}
\end{eqnarray}%
It is\ well known that the contraction of the second Bianchi identity gives%
\begin{equation*}
\nabla _{h}R_{jkl}^{h}=\nabla _{l}R_{jk}-\nabla _{k}R_{jl}.
\end{equation*}%
Thus, Equation (\ref{F240}) becomes 
\begin{equation}
\nabla _{h}\mathcal{W}_{jkl}^{\star h}=\nabla _{l}R_{jk}-\nabla _{k}R_{jl}-%
\frac{1}{3}[g_{jk}\nabla _{l}R-g_{jl}\nabla _{k}R].  \label{1-F24}
\end{equation}%
If the $\mathcal{W}^{\star }-$curvature tensor is divergence free, then
Equation (\ref{1-F24}) turns into 
\begin{equation*}
0=\nabla _{l}R_{jk}-\nabla _{k}R_{jl}-\frac{1}{3}[g_{jk}\nabla
_{l}R-g_{jl}\nabla _{k}R].
\end{equation*}%
Multiplying by $g^{jk}$ we have 
\begin{equation}
\nabla _{l}R=0.  \label{F26}
\end{equation}%
Thus, the tensor $R_{ij}$ is a Codazzi tensor and $R$ is constant.
Conversely, assume that the Ricci tensor is a Codazzi tensor. Then%
\begin{eqnarray*}
\nabla _{h}\mathcal{W}_{jkl}^{\star h} &=&-\frac{1}{3}[g_{jk}\nabla
_{l}R-g_{jl}\nabla _{k}R] \\
0 &=&\nabla _{l}R_{jk}-\nabla _{k}R_{jl}
\end{eqnarray*}%
However, the last equation implies that $\nabla _{l}R=0$. Consequently, the $%
\mathcal{W}^{\star }-$curvature tensor has zero divergence.

\begin{theorem}
The $\mathcal{W}^{\star }-$curvature tensor has zero divergence if and only
if the Ricci tensor is a Codazzi tensor. In both cases, the scalar curvature
is constant.
\end{theorem}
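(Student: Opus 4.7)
\medskip
\noindent\textbf{Proof plan.} The plan is to read off everything from the divergence formula (\ref{1-F24}) and to treat the two implications separately. The key technical tool in both directions is the once-contracted second Bianchi identity $\nabla_{k}R^{k}_{\ l}=\tfrac{1}{2}\nabla_{l}R$, which turns every contracted Codazzi-type expression into a scalar statement about $\nabla R$.

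For the forward direction I would assume $\nabla_{h}\mathcal{W}^{\star h}_{jkl}=0$ and rewrite (\ref{1-F24}) as
\[
\nabla_{l}R_{jk}-\nabla_{k}R_{jl}=\frac{1}{3}\bigl[g_{jk}\nabla_{l}R-g_{jl}\nabla_{k}R\bigr].
\]
Contracting with $g^{jk}$, using $g^{jk}g_{jk}=4$ and $g^{jk}g_{jl}=\delta^{k}_{l}$, and applying the contracted Bianchi identity to the left-hand side collapses the equation into a single scalar relation proportional to $\nabla_{l}R$. Collecting coefficients forces $\nabla_{l}R=0$, which reinserted into the displayed equation kills the bracketed term and leaves exactly the Codazzi condition $\nabla_{l}R_{jk}=\nabla_{k}R_{jl}$.

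For the converse I would start from the Codazzi condition, contract it with $g^{jk}$, and invoke the contracted Bianchi identity on the right-hand side to obtain $\nabla_{l}R=\tfrac{1}{2}\nabla_{l}R$, hence $\nabla_{l}R=0$. Substituting both $\nabla_{l}R_{jk}=\nabla_{k}R_{jl}$ and $\nabla_{l}R=0$ into (\ref{1-F24}) makes every term on the right-hand side vanish, so $\nabla_{h}\mathcal{W}^{\star h}_{jkl}=0$. Because $\nabla_{l}R=0$ appears as an intermediate conclusion in both implications, the final assertion that $R$ is constant follows without extra work.

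I do not anticipate a genuine obstacle: both directions reduce to careful bookkeeping of contractions, with the dimension entering only through $g^{jk}g_{jk}=4$, which matches the $\tfrac{1}{3}$ coefficient built into the definition of $\mathcal{W}^{\star}$. The one spot that deserves attention is checking that after contraction the surviving coefficient of $\nabla_{l}R$ is nonzero, so that one may legitimately conclude $\nabla_{l}R=0$ rather than a tautology; this is where the dimension $n=4$ plays its role.
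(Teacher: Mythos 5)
Your proposal is correct and follows essentially the same route as the paper: contract the divergence identity (\ref{1-F24}) with $g^{jk}$, use the contracted second Bianchi identity $\nabla_{k}R^{k}_{\ l}=\tfrac{1}{2}\nabla_{l}R$ to force $\nabla_{l}R=0$, and then read off the Codazzi condition (and conversely). Your write-up is in fact more explicit than the paper's about the contraction bookkeeping and about why the surviving coefficient of $\nabla_{l}R$ is nonzero.
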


The divergence of the Weyl curvature $\mathcal{C}$ tensor is given by%
\begin{equation*}
\nabla _{h}\mathcal{C}_{ijk}^{h}=\frac{n-3}{n-2}\left[ \nabla
_{k}R_{ij}-\nabla _{j}R_{ik}\right] +\frac{1}{2\left( n-1\right) }%
[g_{ij}\nabla _{k}R-g_{ik}\nabla _{j}R].
\end{equation*}

\begin{remark}
Since divergence free of $\mathcal{W}^{\star }-$curvature tensor implies
that $R_{ij}$ is a Codazzi tensor, the conformal curvature tensor has zero
divergence.
\end{remark}

Equation (\ref{4-F}) yields%
\begin{equation*}
\nabla _{l}R_{ij}-\frac{1}{2}g_{ij}\nabla _{l}R=k\nabla _{l}T_{ij}.
\end{equation*}%
The above theorem now implies the following result.

\begin{corollary}
The energy-momentum tensor is a Codazzi tensor if and only if the $\mathcal{W%
}^{\star }-$curvature tensor has zero divergence. In both cases, the scalar
curvature is constant.
\end{corollary}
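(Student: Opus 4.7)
The plan is to derive the corollary directly from the preceding theorem by feeding Einstein's field equation into it. The key identity is the one already displayed just above the corollary, namely
\begin{equation*}
\nabla_{l}R_{ij}-\tfrac{1}{2}g_{ij}\nabla_{l}R = k\,\nabla_{l}T_{ij},
\end{equation*}
obtained by covariantly differentiating (\ref{4-F}) and using $\nabla g=0$ together with the constancy of $\Lambda$ and $k$. Both implications will be read off this relation together with the previous theorem.

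For the forward direction, I would assume that $\mathcal{W}^{\star}$ has zero divergence. By the previous theorem, the Ricci tensor is of Codazzi type and the scalar curvature $R$ is constant, so $\nabla_{l}R=0$. The displayed identity then reduces to $k\,\nabla_{l}T_{ij}=\nabla_{l}R_{ij}$, whose right-hand side is symmetric under $l\leftrightarrow i$ because Ricci is Codazzi. Hence $\nabla_{l}T_{ij}=\nabla_{i}T_{lj}$, i.e.\ $T$ is a Codazzi tensor.

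For the converse, suppose $T$ is Codazzi, i.e.\ $\nabla_{l}T_{ij}=\nabla_{i}T_{lj}$. Antisymmetrizing the displayed identity in $l$ and $i$ gives
\begin{equation*}
\nabla_{l}R_{ij}-\nabla_{i}R_{lj} \;=\; \tfrac{1}{2}\bigl(g_{ij}\nabla_{l}R-g_{lj}\nabla_{i}R\bigr).
\end{equation*}
Contracting with $g^{ij}$ and using $g^{ij}\nabla_{l}R_{ij}=\nabla_{l}R$ together with the twice-contracted Bianchi identity $g^{ij}\nabla_{i}R_{lj}=\tfrac{1}{2}\nabla_{l}R$, one obtains $-\nabla_{l}R=0$, so $R$ is constant. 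Reinserting this in the antisymmetrized relation shows that the Ricci tensor is Codazzi, and the previous theorem then yields $\nabla_{h}\mathcal{W}^{\star h}_{jkl}=0$. The constancy of $R$ in either case has already been established in this argument, matching the claim in the statement.

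The step I expect to require the most care is the converse: one must resist simply quoting $\nabla_{l}R=0$ from the previous theorem (which applies to the Ricci-Codazzi hypothesis, not directly to the $T$-Codazzi hypothesis) and instead derive it honestly from Einstein's equation via the twice-contracted Bianchi identity, as above. Once $\nabla_{l}R=0$ is in hand, everything else is a direct reduction to the previous theorem.
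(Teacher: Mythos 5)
Your proof is correct and follows essentially the same route as the paper, which simply differentiates Einstein's equation to get $\nabla_{l}R_{ij}-\tfrac{1}{2}g_{ij}\nabla_{l}R=k\nabla_{l}T_{ij}$ and invokes the preceding theorem. You supply the details the paper leaves implicit — in particular the honest derivation of $\nabla_{l}R=0$ in the converse via the contracted Bianchi identity — and those details check out.
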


Einstein's field equation infers

\begin{eqnarray}
k\left( \nabla _{l}T_{ij}-\nabla _{i}T_{jl}\right) &=&\nabla _{l}\left(
R_{ij}-\frac{1}{2}g_{ij}R\right) -\nabla _{i}\left( R_{lj}-\frac{1}{2}%
g_{lj}R\right)  \label{F29} \\
&=&\nabla _{l}R_{ij}-\nabla _{i}R_{lj}-\frac{1}{2}\left( g_{ij}\nabla
_{l}R-g_{lj}\nabla _{i}R\right) \\
&=&\nabla _{h}\mathcal{W}_{jil}^{\star h}-\frac{1}{6}\left( g_{ij}\nabla
_{l}R-g_{lj}\nabla _{i}R\right) .
\end{eqnarray}%
Now, it is noted that the above theorem may be proved using this identity.

\section{$\mathcal{W}^{\star }-$symmetric space-times}

A space-time $M$ is called $\mathcal{W}^{\star }-$symmetric if%
\begin{equation*}
\nabla _{m}\mathcal{W}_{ijkl}^{\star }=0.
\end{equation*}%
Applying the covariant derivative on the both sides of equation (\ref{F2}),
one gets%
\begin{equation}
\nabla _{m}\mathcal{W}_{ijkl}^{\star }=\nabla _{m}R_{ijkl}-\frac{1}{n-1}%
\left[ g_{jk}\nabla _{m}R_{il}-g_{jl}\nabla _{m}R_{ik}\right] .  \label{F4}
\end{equation}%
If $M$ is a $\mathcal{W}^{\star }-$symmetric space-time, then%
\begin{equation*}
\nabla _{m}R_{ijkl}=\frac{1}{3}[g_{jk}\nabla _{m}R_{il}-g_{jl}\nabla
_{m}R_{ik}].
\end{equation*}%
Multiplying the both sides by $g^{il},$ we get%
\begin{equation*}
\nabla _{m}R_{jk}=\frac{1}{3}[g_{jk}\nabla _{m}R-\nabla _{m}R_{jk}],
\end{equation*}%
and hence%
\begin{equation}
\nabla _{m}R_{jk}=\frac{1}{4}g_{jk}\nabla _{m}R.  \label{F5}
\end{equation}

Now, the following theorem rises.

\begin{theorem}
Assume that $M$ is a $\mathcal{W}^{\star }-$symmetric space-time, then $M$
is a Ricci symmetric if the scalar curvature is constant.
\end{theorem}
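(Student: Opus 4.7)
The plan is to leverage the identity already derived immediately before the theorem statement, namely equation (\ref{F5}):
\begin{equation*}
\nabla _{m}R_{jk}=\frac{1}{4}g_{jk}\nabla _{m}R.
\end{equation*}
This identity is a direct consequence of $\mathcal{W}^{\star}$-symmetry (via tracing the covariant derivative of the defining relation (\ref{F2}) with $g^{il}$ in dimension $n=4$), so nothing new has to be set up; I would simply recall it.

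From there the argument is one line: under the hypothesis that the scalar curvature $R$ is constant, the right-hand side vanishes, so $\nabla_{m}R_{jk}=0$, which is exactly the statement that $M$ is Ricci symmetric. I would spell out this substitution explicitly and then conclude.

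Because the heavy lifting was done in deriving (\ref{F5}), there is essentially no obstacle; the only subtlety to flag is the dimensional input, since the factor $\tfrac{1}{4}$ on the right of (\ref{F5}) comes from $n=4$ (the $4$-dimensional space-time setting fixed at the start of the section). I would briefly point out that the same conclusion would hold in any dimension $n\geq 3$, since the analogous scalar relation $\nabla_m R_{jk}=\tfrac{1}{n}g_{jk}\nabla_m R$ reduces to Ricci parallelism whenever $\nabla_m R=0$. No further calculation is required, so a short paragraph citing (\ref{F5}) and performing the substitution completes the proof.
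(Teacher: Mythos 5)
Your proposal is correct and follows the paper's own route: the paper derives equation (\ref{F5}), $\nabla_{m}R_{jk}=\frac{1}{4}g_{jk}\nabla_{m}R$, from $\mathcal{W}^{\star}$-symmetry by contracting with $g^{il}$, and the theorem is then immediate once $\nabla_{m}R=0$. Your added remark about the dimensional generalization is a harmless (and accurate) aside, but the core argument is identical to the paper's.
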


The second Bianchi identity for $\mathcal{W}^{\star }-$curvature tensor is%
\begin{eqnarray}
\nabla _{m}\mathcal{W}_{ijkl}^{\star }+\nabla _{k}\mathcal{W}_{ijlm}^{\star
}+\nabla _{l}\mathcal{W}_{ijmk}^{\star } &=&-\frac{1}{3}\left[ g_{jk}(\nabla
_{m}R_{il}-\nabla _{l}R_{im})+g_{jl}(\nabla _{k}R_{im}-\nabla _{m}R_{ik})%
\right]  \notag \\
&&-\frac{1}{3}g_{jm}(\nabla _{l}R_{ik}-\nabla _{k}R_{il}).  \label{F7}
\end{eqnarray}%
If the Ricci tensor satisfies $\nabla _{m}R_{il}=\nabla _{l}R_{im}$, then%
\begin{equation}
\nabla _{m}\mathcal{W}_{ijkl}^{\star }+\nabla _{k}\mathcal{W}_{ijlm}^{\star
}+\nabla _{l}\mathcal{W}_{ijmk}^{\star }=0.  \label{F8}
\end{equation}

Conversely, if the above equation holds, then Equation (\ref{F7}) implies%
\begin{equation}
g_{jk}(\nabla _{m}R_{il}-\nabla _{l}R_{im})+g_{jl}(\nabla _{k}R_{im}-\nabla
_{m}R_{ik})+g_{jm}(\nabla _{l}R_{ik}-\nabla _{k}R_{il})=0.  \label{1-F7}
\end{equation}%
Multiplying the both sides with $g^{ik}$, then we have 
\begin{equation}
\nabla _{m}R_{jl}=\nabla _{l}R_{jm},  \label{2-F7}
\end{equation}%
which means that the Ricci tensor is of Codazzi type.

\begin{theorem}
The Ricci tensor satisfies $\nabla _{m}R_{il}=\nabla _{l}R_{im}$ if and only
if the $\mathcal{W}^{\star }-$curvature tensor satisfies Equation (\ref{F8}).
\end{theorem}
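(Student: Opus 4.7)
The overall plan is to recognize (\ref{F8}) as an analogue of the second Bianchi identity and to identify its obstruction with the failure of the Ricci tensor to be Codazzi. The main preparatory step is to cyclically sum $\nabla_{\cdot}\mathcal{W}^{\star}_{ij\cdot\cdot}$ over the last three indices: starting from (\ref{F2}) and applying $\nabla_m,\nabla_k,\nabla_l$ respectively, the derivatives of $R_{ijkl}$ cancel by the classical second Bianchi identity, and what survives is a combination of Ricci derivatives weighted by the metric factors $g_{jk}$, $g_{jl}$, $g_{jm}$. This yields the $\mathcal{W}^{\star}$ version of the second Bianchi identity, namely (\ref{F7}).

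The forward implication is then immediate: if $\nabla_m R_{il}=\nabla_l R_{im}$, every parenthesis on the right of (\ref{F7}) vanishes, so (\ref{F8}) holds. For the converse, I would assume (\ref{F8}); then (\ref{F7}) reduces to the purely algebraic identity (\ref{1-F7}) in the Ricci differences. To peel off a single Codazzi relation, I would contract (\ref{1-F7}) with $g^{jk}$: the first term contributes $n(\nabla_m R_{il}-\nabla_l R_{im})$, while the other two collapse via $g^{jk}g_{jl}=\delta^k_l$ and $g^{jk}g_{jm}=\delta^k_m$ to $\nabla_l R_{im}-\nabla_m R_{il}$ each. Collecting, the identity becomes $(n-2)(\nabla_m R_{il}-\nabla_l R_{im})=0$, and the hypothesis $n=4$ gives the Codazzi condition.

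The only subtle point is the choice of contraction. Contracting with $g^{il}$ would mix in the divergence $\nabla^{i}R_{im}$, which by the twice-contracted Bianchi identity equals $\tfrac{1}{2}\nabla_{m}R$; one would then need an auxiliary contraction to eliminate $\nabla R$ before extracting the Codazzi equation. The contraction with $g^{jk}$ avoids this detour entirely, because $g_{j\cdot}$ is the common metric factor shared by all three terms of (\ref{1-F7}), so after tracing only the Ricci-difference combination survives. This is where I expect the bookkeeping to be most delicate, but it is essentially routine once the correct trace is chosen.
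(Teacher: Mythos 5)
Your proof is correct and follows essentially the same route as the paper: derive the Bianchi-type identity (\ref{F7}) by cyclic summation (the Riemann parts cancelling by the classical second Bianchi identity), observe that the forward direction is immediate, and contract the resulting algebraic identity (\ref{1-F7}) for the converse. The only difference is your choice of trace: contracting with $g^{jk}$ gives $(n-2)\left( \nabla _{m}R_{il}-\nabla _{l}R_{im}\right) =0$ at once, whereas the paper contracts with $g^{ik}$, which introduces $\nabla R$ terms via the twice-contracted Bianchi identity and strictly requires a further contraction to eliminate them before (\ref{2-F7}) follows --- a step the paper glosses over, so your version is in fact the cleaner one.
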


For a purely electro-magnetic distribution, Equation (\ref{4-F}) reduces to 
\begin{equation}
R_{ij}=kT_{ij}.  \label{F2-30}
\end{equation}%
Its contraction with $g^{_{ij}}$ gives 
\begin{equation}
R=-kT.  \label{F3-30}
\end{equation}%
In this case, it is $T=R=0$. Thus Equation (\ref{F5}) yields $\nabla
_{m}T_{jk}=0.$

\begin{theorem}
The energy-momentum tensor of a $\mathcal{W}^{\star }-$symmetric space-time
obeying Einstein's field equation for a purely electro-magnetic distribution
is locally symmetric.
\end{theorem}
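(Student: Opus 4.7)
The plan is to combine two already-established facts: that $\mathcal{W}^{\star}$-symmetry forces Equation (\ref{F5}), namely $\nabla_{m}R_{jk} = \tfrac{1}{4}g_{jk}\nabla_{m}R$, and that for a purely electro-magnetic source Einstein's equation degenerates to the proportional form $R_{ij}=kT_{ij}$ in (\ref{F2-30}) with contracted trace $R=-kT$ in (\ref{F3-30}). So conceptually the argument reduces to showing that $\nabla_{m}R=0$ and then promoting this through the algebraic identification between Ricci and energy-momentum.

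My first step is to invoke the standard physical fact that the electromagnetic stress-energy tensor is trace-free in four dimensions, so $T=0$; combined with (\ref{F3-30}) this forces $R\equiv 0$, and in particular $\nabla_{m}R=0$. My second step is then purely algebraic: substituting $\nabla_{m}R=0$ into (\ref{F5}) yields $\nabla_{m}R_{jk}=0$, and covariantly differentiating the algebraic identity $R_{ij}=kT_{ij}$ (with $k$ a constant) immediately gives $\nabla_{m}T_{jk}=0$, which is exactly the desired local symmetry (covariant constancy) of the energy-momentum tensor.

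There is essentially no technical obstacle in the proof; the chain of deductions is a direct composition of identities produced in the preceding sections. The only step that must be imported from outside the derivation is the tracelessness of the Maxwell stress-energy tensor, which is a standard input from physics rather than a consequence of the $\mathcal{W}^{\star}$-machinery. Once that input is accepted, the vanishing of $R$ collapses the right-hand side of (\ref{F5}) and the conclusion is automatic, so a fully written proof would be only a few lines long and essentially formalizes the informal computation already displayed just before the theorem statement.
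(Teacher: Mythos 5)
Your proposal is correct and follows essentially the same route as the paper: the authors likewise reduce Einstein's equation to $R_{ij}=kT_{ij}$, invoke the tracelessness of the electromagnetic energy-momentum tensor to conclude $T=R=0$, and then feed $\nabla_{m}R=0$ into Equation (\ref{F5}) to obtain $\nabla_{m}R_{jk}=0$ and hence $\nabla_{m}T_{jk}=0$. Your write-up merely makes explicit the step (tracelessness of the Maxwell stress-energy) that the paper states without comment.
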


\section{$\mathcal{W}^{\star }-$flat space-times}

Now, we consider $\mathcal{W}^{\star }-$flat space-times. Multiplying both
sides of Equation (\ref{F2}) by $g^{il}$ yields%
\begin{eqnarray*}
\mathcal{W}_{jk}^{\star } &=&g^{il}\mathcal{W}_{ijkl}^{\star } \\
&=&\frac{4}{3}\left( R_{jk}-\frac{R}{4}g_{jk}\right) .
\end{eqnarray*}%
Thus, a $\mathcal{W}_{jk}^{\star }-$curvature flat space-time is Einstein,
i.e.,%
\begin{equation}
R_{jk}=\frac{R}{4}g_{jk}.  \label{F-3}
\end{equation}%
Now, Equation (\ref{F2}) becomes%
\begin{equation*}
\mathcal{W}_{ijkl}^{\star }=R_{ijkl}-\frac{R}{12}[g_{ik}g_{jl}-g_{jl}g_{jk}].
\end{equation*}

\begin{theorem}
A space-time manifold $M$ is Einstein if and only if $\mathcal{W}%
_{jk}^{\star }=0$. Moreover, a $\mathcal{W}^{\ast }-$flat space-time has a
constant curvature.
\end{theorem}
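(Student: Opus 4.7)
The plan is to read both statements off from the trace identity that the authors have already derived, namely
\begin{equation*}
\mathcal{W}_{jk}^{\star }=\frac{4}{3}\left( R_{jk}-\frac{R}{4}g_{jk}\right) ,
\end{equation*}
together with the defining formula (\ref{F2}) for $\mathcal{W}_{ijkl}^{\star }$.

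For the first (iff) assertion, I would simply observe that the displayed identity gives $\mathcal{W}_{jk}^{\star }=0$ if and only if $R_{jk}=\tfrac{R}{4}g_{jk}$, which in dimension $n=4$ is precisely the Einstein condition. Both directions are immediate; no further work is needed.

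For the "moreover" part, I would proceed in three short steps. First, $\mathcal{W}^{\star }$-flatness forces the trace $\mathcal{W}_{jk}^{\star }$ to vanish, hence by the first part $M$ is Einstein with $R_{ij}=\tfrac{R}{4}g_{ij}$. Second, substituting this Einstein condition back into the defining relation (\ref{F2}) with $n=4$ yields
\begin{equation*}
0=\mathcal{W}_{ijkl}^{\star }=R_{ijkl}-\frac{R}{12}\bigl[g_{il}g_{jk}-g_{ik}g_{jl}\bigr],
\end{equation*}
so $R_{ijkl}=\tfrac{R}{12}\bigl[g_{il}g_{jk}-g_{ik}g_{jl}\bigr]$, which is the curvature of a space of constant sectional curvature $R/12$ provided $R$ is constant. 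Third, to establish that $R$ is indeed constant I would invoke the twice-contracted second Bianchi identity $\nabla^{j}R_{jk}=\tfrac{1}{2}\nabla_{k}R$; combining this with the Einstein condition gives $\tfrac{1}{4}\nabla_{k}R=\tfrac{1}{2}\nabla_{k}R$, whence $\nabla_{k}R=0$. Therefore the scalar curvature is constant and $M$ has constant sectional curvature.

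I do not expect a serious obstacle here: the first half is a direct reading of an identity already on the page, and the "moreover" clause is a standard Schur-type argument once Einsteinness is in hand. The only point that deserves care is making sure the index symmetry in the bracketed Kronecker-type term is written correctly (the paper's display has an apparent typo $g_{jl}g_{jk}$ which should read $g_{il}g_{jk}$), and verifying that the contracted Bianchi step indeed requires $n\neq 2$, which is harmless for a $4$-dimensional space-time.
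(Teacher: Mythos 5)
Your proposal is correct and follows essentially the same route as the paper: contract the defining formula with $g^{il}$ to get $\mathcal{W}_{jk}^{\star }=\frac{4}{3}\left( R_{jk}-\frac{R}{4}g_{jk}\right)$, read off the Einstein condition, and substitute back to obtain the constant-curvature form of $R_{ijkl}$. The only substantive difference is that you explicitly supply the Schur-type step (constancy of $R$ via the contracted Bianchi identity), which the paper leaves implicit, and you correctly flag the index typo in the paper's displayed bracket.
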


A vector field $\xi $ is said to be a conformal vector field if%
\begin{equation*}
\mathcal{L}_{\xi }g=2\phi g,
\end{equation*}%
where $\mathcal{L}_{\xi }$ denotes the Lie derivative along the flow lines
of $\xi $ and $\phi $ is a scalar. $\xi $ is called Killing if $\phi =0$.
Let $T_{ij}$ be the energy-momentum tensor defined on $M$. $\xi $ is said to
be a matter inheritance collineation if%
\begin{equation*}
\mathcal{L}_{\xi }T=2\phi T.
\end{equation*}%
The tensor $T_{ij}$ is said to have a symmetry inheritance property along
the flow lines of $\xi $. $\xi $ is called a matter collineation if $\phi =0$%
. A Killing vector field $\xi $ is a matter collineation. However, a matter
collineation is not generally Killing.

\begin{theorem}
Assume that $M$ is a $\mathcal{W}^{\star }-$flat space-time. Then, $\xi $ is
conformal if and only if $\mathcal{L}_{\xi }T=2\phi T$.
\end{theorem}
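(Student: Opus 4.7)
The plan is to combine the previous theorem, which tells us that $\mathcal{W}^{\star}$-flatness forces $M$ to be Einstein with $R_{jk}=\frac{R}{4}g_{jk}$ (and with constant scalar curvature, since the theorem even gives constant sectional curvature), with Einstein's field equation (\ref{4-F}) in order to pin down the energy-momentum tensor up to a constant multiple of the metric. Once that is achieved, the claimed equivalence should drop out of the linearity of the Lie derivative and the fact that the resulting scalar factor is genuinely a constant.

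Concretely, I would first substitute $R_{ij}=\frac{R}{4}g_{ij}$ into (\ref{4-F}). The terms involving $R$ collapse so that the left-hand side becomes $\bigl(\Lambda-\frac{R}{4}\bigr)g_{ij}$, giving $T_{ij}=c\,g_{ij}$ with $c=\frac{1}{k}\bigl(\Lambda-\frac{R}{4}\bigr)$. The key observation is that $c$ is an honest constant, because $\Lambda$ and $k$ are constants and $R$ is constant by the preceding theorem; hence $\xi(c)=0$ for every vector field $\xi$. Applying $\mathcal{L}_{\xi}$ to $T_{ij}=c\,g_{ij}$ then yields $\mathcal{L}_{\xi}T_{ij}=c\,\mathcal{L}_{\xi}g_{ij}$ with no extra term.

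From this identity both directions are a one-line calculation. If $\mathcal{L}_{\xi}g=2\phi g$, then $\mathcal{L}_{\xi}T = c\,(2\phi g) = 2\phi\,(cg) = 2\phi T$, so $\xi$ is a matter inheritance collineation. Conversely, if $\mathcal{L}_{\xi}T=2\phi T=2\phi c\,g$, then $c\,\mathcal{L}_{\xi}g_{ij}=2\phi c\,g_{ij}$, and cancelling $c$ gives $\mathcal{L}_{\xi}g=2\phi g$, so $\xi$ is conformal.

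The only real obstacle is this cancellation step, which requires $c\neq 0$, i.e.\ $\Lambda\neq R/4$. In the degenerate case $c=0$ the tensor $T$ vanishes identically, and then $\mathcal{L}_{\xi}T=2\phi T$ is automatically satisfied by every vector field and every scalar $\phi$, while conformality of $\xi$ is a genuine restriction; so the biconditional fails in one direction. I would therefore record the hypothesis $T\not\equiv 0$ (equivalently $\Lambda\neq R/4$) explicitly in the proof, and otherwise the argument is essentially a substitution and an invocation of $\mathcal{L}_{\xi}(c\,g)=c\,\mathcal{L}_{\xi}g$.
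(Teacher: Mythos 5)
Your proof is correct and follows essentially the same route as the paper: substitute the Einstein condition $R_{ij}=\frac{R}{4}g_{ij}$ into Einstein's field equation to obtain $kT_{ij}=\left(\Lambda-\frac{R}{4}\right)g_{ij}$, observe that the coefficient is constant because a $\mathcal{W}^{\star}$-flat space-time has constant scalar curvature, and apply $\mathcal{L}_{\xi}$ to both sides. Your observation that the converse genuinely requires $\Lambda\neq R/4$ (equivalently $T\not\equiv 0$) is a real improvement over the paper, which dismisses that direction with ``it is easy to show'': when $T\equiv 0$ the condition $\mathcal{L}_{\xi}T=2\phi T$ holds vacuously for every $\xi$ while conformality does not, so the stated biconditional needs your nondegeneracy hypothesis.
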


\begin{proof}
Using Equations (\ref{F-3}) and (\ref{4-F}), we have%
\begin{equation}
\left( \Lambda -\frac{R}{4}\right) g_{ij}=kT_{ij}.  \label{F-6}
\end{equation}%
Then%
\begin{equation}
\left( \Lambda -\frac{R}{4}\right) \mathcal{L}_{\xi }g=k\mathcal{L}_{\xi }T.
\label{F-7}
\end{equation}%
Assume that $\xi $ is conformal. The above two equations lead to%
\begin{eqnarray*}
2\phi \left( \Lambda -\frac{R}{4}\right) g &=&k\mathcal{L}_{\xi }T \\
2\phi T &=&\mathcal{L}_{\xi }T.
\end{eqnarray*}%
Conversely, suppose that\ the energy-momentum tensor has a symmetry
inheritance property along $\xi $. It is easy to show that $\xi $ is a
conformal vector field.
\end{proof}

\begin{corollary}
Assume that $M$ is a $\mathcal{W}^{\star }-$flat space-time. Then, $M$
admits a matter collineation $\xi $ if and only if\ $\xi $ is Killing.
\end{corollary}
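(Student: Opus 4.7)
The plan is to derive this as a direct specialization of the preceding theorem by setting the conformal factor $\phi$ to zero. The preceding theorem asserts the biconditional $\mathcal{L}_\xi g = 2\phi g \Longleftrightarrow \mathcal{L}_\xi T = 2\phi T$ under the $\mathcal{W}^\star$-flat hypothesis, with the \emph{same} scalar $\phi$ on both sides; so the corollary is really the $\phi=0$ slice of that biconditional, and essentially no new geometric content is needed.

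In more detail, I would first recall that a Killing field is by definition the $\phi=0$ case of a conformal field, and a matter collineation is by definition the $\phi=0$ case of matter inheritance. Then for the forward direction, I would assume $\xi$ is a matter collineation, so $\mathcal{L}_\xi T = 0$, take the Lie derivative of Equation (\ref{F-6}), namely $(\Lambda - R/4)\,\mathcal{L}_\xi g = k\,\mathcal{L}_\xi T = 0$, and read off $\mathcal{L}_\xi g = 0$; for the reverse direction, I would assume $\xi$ is Killing, so $\mathcal{L}_\xi g = 0$, and apply the same equation to conclude $\mathcal{L}_\xi T = 0$. Alternatively, and more economically, one simply invokes the preceding theorem with $\phi=0$.

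The only genuine subtlety is the possible degeneracy $\Lambda = R/4$. In that case Equation (\ref{F-6}) forces $T_{ij} = 0$ identically, so every vector field is trivially a matter collineation, and the statement "matter collineation $\Leftrightarrow$ Killing" would fail unless one reads it as "there exists a matter collineation $\xi$ iff there exists a Killing $\xi$," which is vacuously true on any space-time admitting a Killing field. I expect this to be the only real obstacle, and I would handle it with a brief remark that the corollary is meant under the generic condition $\Lambda \neq R/4$, consistent with the nontrivial content of the preceding theorem. Otherwise, the argument is a one-line specialization and requires no further computation.
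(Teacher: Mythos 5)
Your proposal is correct and takes essentially the same route as the paper, which offers no separate proof for this corollary and treats it precisely as the $\phi=0$ specialization of the preceding theorem, itself proved by Lie-differentiating $\left( \Lambda -\frac{R}{4}\right) g_{ij}=kT_{ij}$. Your caveat about the degenerate case $\Lambda =\frac{R}{4}$ (where $T_{ij}\equiv 0$, every vector field is a matter collineation, and the equivalence with Killing fails) is a genuine gap in the paper's own statement that your reading correctly isolates.
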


Equations (\ref{F-3}) and (\ref{4-F}) imply%
\begin{equation}
\left( \Lambda -\frac{R}{4}\right) g_{ij}=kT_{ij}.  \label{1-F12}
\end{equation}%
Taking the covariant derivative of \ref{1-F12} we get%
\begin{equation}
\nabla _{l}T_{ij}=\frac{1}{k}\nabla _{l}\left( \Lambda -\frac{R}{4}\right)
g_{ij}.  \label{2-F12}
\end{equation}%
Since a $\mathcal{W}^{\star }-$curvature flat space-time has $\nabla _{l}R=0$%
, $\nabla _{l}T_{ij}=0.$

\begin{theorem}
The energy-momentum tensor of a $\mathcal{W}^{\star }-$flat space-time is
covariantly constant.
\end{theorem}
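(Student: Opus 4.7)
The plan is to chain together two facts already established in the excerpt: that a $\mathcal{W}^{\star}$-flat space-time is Einstein (from the theorem in the $\mathcal{W}^{\star}$-flat section, giving $R_{jk}=\tfrac{R}{4}g_{jk}$) and that such a space-time has constant scalar curvature. Substituting the Einstein condition into the field equation (\ref{4-F}) produces Equation (\ref{1-F12}), namely
\begin{equation*}
\left(\Lambda-\frac{R}{4}\right)g_{ij}=kT_{ij},
\end{equation*}
so the tensor $T_{ij}$ is expressible as a scalar multiple of $g_{ij}$. From here the proof should just be a covariant differentiation.

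Concretely, I would apply $\nabla_{l}$ to both sides, use the compatibility $\nabla_{l}g_{ij}=0$ and the constancy of $\Lambda$, and land exactly on Equation (\ref{2-F12}):
\begin{equation*}
\nabla_{l}T_{ij}=-\frac{1}{4k}\,(\nabla_{l}R)\,g_{ij}.
\end{equation*}
The theorem then follows provided $\nabla_{l}R=0$.

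The only real content, and the step most likely to trip a reader, is verifying that the scalar curvature is indeed constant. I would justify this in one of two equivalent ways. The cleanest route is to observe that a $\mathcal{W}^{\star}$-flat space-time is, in particular, one whose $\mathcal{W}^{\star}$-curvature tensor has vanishing divergence; the theorem in Section~3 then delivers $\nabla_{l}R=0$ directly. Alternatively, one could note that on any $n$-dimensional Einstein manifold with $R_{ij}=\tfrac{R}{n}g_{ij}$ the contracted second Bianchi identity $\nabla^{i}R_{ij}=\tfrac{1}{2}\nabla_{j}R$ collapses to $\tfrac{1}{n}\nabla_{j}R=\tfrac{1}{2}\nabla_{j}R$, forcing $\nabla_{j}R=0$ whenever $n>2$, and here $n=4$.

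Combining this with the displayed expression for $\nabla_{l}T_{ij}$ yields $\nabla_{l}T_{ij}=0$, which is the claim. There is no serious obstacle; the entire argument is essentially a one-line calculation once the two earlier theorems are invoked, so the main task in writing it up is simply to make explicit the citation of the $\mathcal{W}^{\star}$-flat $\Rightarrow$ Einstein and divergence-free $\Rightarrow$ $\nabla R=0$ results.
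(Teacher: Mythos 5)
Your proposal is correct and follows essentially the same route as the paper: substitute the Einstein condition $R_{jk}=\tfrac{R}{4}g_{jk}$ into the field equation to get $\left(\Lambda-\tfrac{R}{4}\right)g_{ij}=kT_{ij}$, differentiate covariantly, and use $\nabla_{l}R=0$. The only difference is that the paper merely asserts the constancy of the scalar curvature, while you justify it explicitly (via the divergence-free theorem of Section~3 or the contracted second Bianchi identity on an Einstein $4$-manifold), which fills in a detail rather than changing the argument.
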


Let $M$ be a space-time and $\mathcal{W}_{klm}^{\ast i}=g^{ij}\mathcal{W}%
_{jklm}^{\ast }$ be a $\left( 1,3\right) $ curvature tensor. According to 
\cite{Krupka:1996}, there exists a unique traceless tensor $\mathcal{B}%
_{klm}^{i}$ and three unique $\left( 0,2\right) $ tensors $\mathcal{C}_{kl},$
$\mathcal{D}_{kl},$ $\mathcal{E}_{kl}$ such that 
\begin{equation*}
\mathcal{W}_{klm}^{\ast i}=\mathcal{B}_{klm}^{i}+\delta _{k}^{i}\mathcal{C}%
_{lm}+\delta _{l}^{i}\mathcal{D}_{km}+\delta _{m}^{i}\mathcal{E}_{kl}.
\end{equation*}%
All of these tensors are given by%
\begin{equation*}
\mathcal{C}_{ml}=\frac{1}{33}\left[ 10\mathcal{W}_{tml}^{\ast t}-2\left( 
\mathcal{W}_{mtl}^{\ast t}+\mathcal{W}_{lmt}^{\ast t}\right) \right] =0,
\end{equation*}

\begin{eqnarray*}
\mathcal{D}_{km} &=&\frac{1}{33}\left[ -2\left( \mathcal{W}_{tkm}^{\ast t}+%
\mathcal{W}_{mkt}^{\ast t}\right) +10\mathcal{W}_{ktm}^{\ast t}\right] \\
&=&\frac{1}{9}[R_{km}-\frac{g_{km}}{4}R],
\end{eqnarray*}%
and%
\begin{eqnarray*}
\mathcal{E}_{kl} &=&\frac{1}{33}\left[ 10\mathcal{W}_{klt}^{\ast t}-2\left( 
\mathcal{W}_{tlk}^{\ast t}+\mathcal{W}_{ltk}^{\ast t}\right) \right] \\
&=&\frac{-1}{9}\left[ R_{kl}-\frac{g_{kl}}{4}R\right] .
\end{eqnarray*}%
Assume that the $\mathcal{W}^{\ast }\mathcal{-}$curvature tensor is
traceless. Then%
\begin{equation*}
\mathcal{C}_{kl}=\mathcal{D}_{kl}=\mathcal{E}_{kl}=0,
\end{equation*}%
and consequently%
\begin{equation*}
R_{ml}=\frac{g_{ml}}{4}R.
\end{equation*}

\begin{theorem}
Assume that $M$ is a space-time admitting a traceless $\mathcal{W}^{\ast }%
\mathcal{-}$curvature tensor. Then, $M$ is an Einstein space-time.
\end{theorem}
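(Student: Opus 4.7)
The plan is to exploit the Krupka decomposition of a $(1,3)$-tensor invoked in the paragraph preceding the theorem. By that result, the $(1,3)$-curvature tensor $\mathcal{W}^{\ast i}_{klm}$ admits a unique decomposition
\begin{equation*}
\mathcal{W}^{\ast i}_{klm} = \mathcal{B}^{i}_{klm} + \delta^{i}_{k}\mathcal{C}_{lm} + \delta^{i}_{l}\mathcal{D}_{km} + \delta^{i}_{m}\mathcal{E}_{kl},
\end{equation*}
where $\mathcal{B}$ is the traceless part and $\mathcal{C}, \mathcal{D}, \mathcal{E}$ are the three natural $(0,2)$ contractions. The strategy is to combine the uniqueness of this decomposition with the explicit formulas already displayed for $\mathcal{C}, \mathcal{D}, \mathcal{E}$.

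The first step is to verify those three trace formulas, which express $\mathcal{C}_{ml}$, $\mathcal{D}_{km}$, $\mathcal{E}_{kl}$ as fixed rational combinations of the contractions $\mathcal{W}^{\ast t}_{\, tml}$, $\mathcal{W}^{\ast t}_{\, mtl}$, $\mathcal{W}^{\ast t}_{\, lmt}$. Starting from $\mathcal{W}^{\ast h}_{\, jkl} = R^{h}_{\, jkl} - \tfrac{1}{3}\bigl[g_{jk}R^{h}_{\, l} - g_{jl}R^{h}_{\, k}\bigr]$ specialized to $n=4$, I would contract in turn over each of the pairs $(h,j)$, $(h,k)$, $(h,l)$, using the standard symmetries of the Riemann tensor (antisymmetry in the last two indices, pair-exchange symmetry, and the first Bianchi identity) to reduce $R^{t}_{\, tkl}$, $R^{t}_{\, jtl}$, $R^{t}_{\, jkt}$ to expressions built from $R_{ij}$ and $R$. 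The resulting values match the formulas stated, showing in particular that $\mathcal{C}_{lm}\equiv 0$ identically and that both $\mathcal{D}_{km}$ and $-\mathcal{E}_{km}$ are proportional to the traceless Ricci tensor $R_{km} - \tfrac{R}{4}g_{km}$.

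With those identities in hand, the theorem is immediate. If $\mathcal{W}^{\ast}$ is assumed traceless, then by definition all three contractions of $\mathcal{W}^{\ast i}_{klm}$ vanish, so $\mathcal{C} = \mathcal{D} = \mathcal{E} = 0$ by uniqueness. Either of the non-trivial relations $\mathcal{D}_{km}=0$ or $\mathcal{E}_{km}=0$ then gives $R_{km} = \tfrac{R}{4}g_{km}$, so $M$ is Einstein in the four-dimensional sense used throughout the paper.

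The main obstacle is essentially bookkeeping: correctly evaluating the three trace contractions of $\mathcal{W}^{\ast}$ and solving the resulting $3 \times 3$ linear system, with coefficients $10$ and $-2$, to obtain the coefficient pattern $\tfrac{1}{33}$ appearing in the stated formulas for $\mathcal{C}, \mathcal{D}, \mathcal{E}$. Once those contractions are verified, invoking uniqueness in Krupka's decomposition finishes the argument in a single line.
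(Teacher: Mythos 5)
Your proposal is correct and follows the same route as the paper: compute the three traces of $\mathcal{W}^{\ast i}_{klm}$, identify $\mathcal{D}_{km}$ and $\mathcal{E}_{kl}$ with multiples of the traceless Ricci tensor via Krupka's decomposition, and conclude that tracelessness forces $R_{km}=\tfrac{R}{4}g_{km}$. The only difference is that you propose to verify the stated contraction formulas explicitly, which the paper simply asserts.
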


For a perfect fluid space-time with the energy density $\mu $ and isotropic
pressure $p$, it is%
\begin{equation}
T_{ij}=\left( \mu +p\right) u_{i}u_{j}+pg_{ij},  \label{F12}
\end{equation}%
where $u_{i}$ is the velocity of the fluid flow with $g_{ij}u^{_{j}}=u_{i}$
and $u_{i}u^{i}=-1$\cite{ONeill:1983,Mantica:2012,Mantica:2014}. In \cite[%
Theorem 2.2]{De:2014}, a characterization of such space-times is given. This
result leads us to.

\begin{theorem}
Assume that the perfect fluid space-time $M$ is $\mathcal{W}^{\star }-$%
semi-symmetric. Then, $M$ is regarded as inflation and this fluid acts as a
cosmological constant. Moreover, the perfect fluid represents the
quintessence barrier.
\end{theorem}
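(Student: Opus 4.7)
The plan is to reduce the statement to a direct application of Theorem 3 combined with the characterization of semi-symmetric perfect-fluid energy-momentum tensors due to De and Velimirović \cite{De:2014}.

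First, I would observe that if $R(\zeta,\xi)\cdot\mathcal{W}^{\star}=0$ at the $(0,4)$-level, then contracting the local-coordinate identity (\ref{1F}) with $g^{il}$ (using $\nabla g=0$, so the contraction commutes with the derivation action) yields $R(\zeta,\xi)\cdot\mathcal{W}_{jk}^{\star}=0$; that is, the trace $\mathcal{W}_{jk}^{\star}$ is itself semi-symmetric. Applying Theorem 3, which via Einstein's field equation ties semi-symmetry of $\mathcal{W}_{jk}^{\star}$ to semi-symmetry of the energy-momentum tensor, the relation $\nabla_{[\mu}\nabla_{\nu]}T_{ij}=0$ holds on $M$.

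Second, I would substitute the perfect-fluid form $T_{ij}=(\mu+p)u_{i}u_{j}+pg_{ij}$ into the semi-symmetry relation and exploit the normalization $u^{i}u_{i}=-1$ to extract algebraic constraints on $\mu$ and $p$. This step is precisely the content of \cite[Theorem 2.2]{De:2014}, which under the semi-symmetry of $T$ yields the barotropic equation of state $p+\mu=0$. I would invoke that theorem at this point rather than reproduce the expansion.

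Finally, I would translate the algebraic output into its cosmological meaning. Substituting $p=-\mu$ into Einstein's field equation (\ref{4-F}) gives
\begin{equation*}
R_{ij}-\tfrac{1}{2}g_{ij}R+(\Lambda+k\mu)g_{ij}=0,
\end{equation*}
so the fluid contributes only to an effective cosmological constant $\Lambda+k\mu$; this is the de Sitter regime driving inflation. Because the equation-of-state parameter is $w=p/\mu=-1$, which saturates the threshold $w=-\tfrac{1}{3}$ separating decelerating from accelerating cosmic expansion, the fluid sits on the \emph{quintessence barrier} in the standard $w$-classification of dark-energy models.

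The main obstacle is not technical: the algebraic work is already encapsulated in Theorem 3 and in \cite[Theorem 2.2]{De:2014}. The only point requiring care is the first reduction, namely justifying that semi-symmetry of the full $(0,4)$-tensor $\mathcal{W}^{\star}$ passes to its trace $\mathcal{W}_{jk}^{\star}$; this is immediate from the linearity of $R(\zeta,\xi)\cdot$ acting as a derivation together with metric compatibility.
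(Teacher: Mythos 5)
Your proposal matches the paper's argument: the paper likewise reduces $\mathcal{W}^{\star}$-semi-symmetry to semi-symmetry of the energy-momentum tensor via its earlier results (the contraction in (\ref{1F}) and Theorem 3) and then simply invokes \cite[Theorem 2.2]{De:2014} for the inflation / cosmological-constant / quintessence-barrier conclusions. The only quibble is your closing gloss that $w=-1$ ``saturates the threshold $w=-\tfrac{1}{3}$'' --- the quintessence barrier referred to here is the $w=-1$ boundary itself --- but this is an interpretive remark inherited from the cited reference, not a gap in the mathematics.
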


Using Equations (\ref{F-6}), we have%
\begin{equation}
\left( \Lambda -kp-\frac{R}{4}\right) g_{ij}=k\left( \mu +p\right)
u_{i}u_{j}.  \label{F13}
\end{equation}%
Multiplying the both sides by $g^{ij}$ we get%
\begin{equation}
R=4\Lambda +k\left( \mu -3p\right) .  \label{F14}
\end{equation}

For $\mathcal{W}^{\star }-$curvature flat space-times, the scalar curvature
is constant and consequently%
\begin{equation}
\mu -3p=\mathrm{constant}.  \label{F1-14}
\end{equation}%
Again, a contraction of Equation (\ref{F13}) with $u^{i}$ leads to%
\begin{equation}
R=4\left( k\mu +\Lambda \right) .  \label{F15}
\end{equation}%
The comparison between (\ref{F14}) and (\ref{F15}) gives%
\begin{equation}
\mu +p=0,  \label{F16}
\end{equation}%
i.e., the perfect fluid performs as a cosmological constant. Then Equation (%
\ref{F12}) infers%
\begin{equation}
T_{ij}=pg_{ij}.  \label{F17}
\end{equation}

For a $\mathcal{W}^{\star }-$flat space-time, the scalar curvature is
constant. Thus $\mu =\mathrm{constant}$ and consequently $p=\mathrm{constant}
$. Therefore, the covariant derivative of equation (\ref{F17}) implies $%
\nabla _{l}T_{ij}=0.$

\begin{theorem}
Let $M$ be a perfect fluid $\mathcal{W}^{\star }-$flat space-time obeying
Equation (\ref{4-F}), then the $\mu $ and $p$ are constants and $\mu +p=0$
i.e. the perfect fluid performs as a cosmological constant. Moreover, $%
\nabla _{l}T_{ij}=0$.
\end{theorem}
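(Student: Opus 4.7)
The plan is to combine the perfect fluid ansatz for $T_{ij}$ with the $\mathcal{W}^{\star}$-flatness hypothesis and extract information by applying two independent contractions. By the earlier theorem characterizing $\mathcal{W}^{\star}$-flat space-times, flatness already forces $M$ to be Einstein with $R_{jk}=\tfrac{R}{4}g_{jk}$ and forces the scalar curvature $R$ to be constant, so I may freely use Equation~(\ref{F-6}), namely $(\Lambda-R/4)g_{ij}=kT_{ij}$, as my starting point.

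First, I would substitute the perfect fluid expression $T_{ij}=(\mu+p)u_iu_j+pg_{ij}$ into (\ref{F-6}) to obtain
\begin{equation*}
\left(\Lambda-kp-\tfrac{R}{4}\right)g_{ij}=k(\mu+p)u_iu_j,
\end{equation*}
which is precisely (\ref{F13}). The crucial observation is that the left-hand side is a pure multiple of $g_{ij}$ while the right-hand side is a pure multiple of $u_iu_j$; to expose the constraint, I will contract this identity in two different ways. Contracting with $g^{ij}$ and using $u^iu_i=-1$ produces one expression for $R$ in terms of $\mu,p,\Lambda$ (equation (\ref{F14})), while contracting with $u^iu^j$ produces a second expression for $R$ (equation (\ref{F15})). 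Comparing the two expressions cancels $\Lambda$ and leaves the algebraic identity $\mu+p=0$, showing that the perfect fluid behaves as a cosmological constant.

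Having established $\mu+p=0$, the perfect fluid formula collapses to $T_{ij}=pg_{ij}$. I now combine this with the constancy of $R$: substituting $\mu+p=0$ into either (\ref{F14}) or (\ref{F15}) expresses $\mu$ as an affine function of the constant scalar $R$, so $\mu$ is constant, and therefore $p=-\mu$ is constant as well. Finally, taking the covariant derivative of $T_{ij}=pg_{ij}$, using metric compatibility $\nabla_l g_{ij}=0$ and the fact that $p$ is a constant scalar, yields $\nabla_l T_{ij}=0$, completing the proof.

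There is no real obstacle here; the argument is essentially a bookkeeping exercise that leverages the two previously established theorems (Einstein condition and constancy of $R$ for $\mathcal{W}^{\star}$-flat space-times). The only step requiring the tiniest bit of care is verifying that the two contractions indeed yield independent scalar equations for $R$; this is automatic because $g^{ij}$ and $u^iu^j$ are linearly independent symmetric tensors, so their pairings against the relation (\ref{F13}) encode distinct information.
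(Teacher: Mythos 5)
Your proposal is correct and follows essentially the same route as the paper: substitute the perfect fluid form into $(\Lambda-R/4)g_{ij}=kT_{ij}$, contract once with $g^{ij}$ and once with the velocity to obtain the two expressions (\ref{F14}) and (\ref{F15}) for $R$, compare them to get $\mu+p=0$, and then use the constancy of $R$ to conclude $\mu$, $p$ are constant and $\nabla_l T_{ij}=0$. The only (immaterial) difference is that you contract with $u^iu^j$ where the paper contracts with $u^i$ and compares coefficients of $u_j$.
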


The following results are two direct consequences of being $\mathcal{W}%
^{\star }-$curvature flat.

\begin{corollary}
A $\mathcal{W}^{\star }-$flat space-time $M$ obeying Equation (\ref{F2-30})
is a Euclidean space.
\end{corollary}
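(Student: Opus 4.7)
The plan is to combine two facts that are already on the table. First, from the theorem stating that a $\mathcal{W}^{\star}$-flat space-time has constant curvature, I would use the explicit expression $\mathcal{W}_{ijkl}^{\star}=R_{ijkl}-\frac{R}{12}\bigl[g_{ik}g_{jl}-g_{il}g_{jk}\bigr]$ derived just above Theorem~8. Setting $\mathcal{W}_{ijkl}^{\star}=0$ gives
\begin{equation*}
R_{ijkl}=\frac{R}{12}\bigl[g_{ik}g_{jl}-g_{il}g_{jk}\bigr],
\end{equation*}
so the curvature tensor is completely determined by the scalar curvature $R$.

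Second, I would invoke the purely electro-magnetic hypothesis, namely Equation~(\ref{F2-30}): $R_{ij}=kT_{ij}$. The contraction already carried out in Equation~(\ref{F3-30}) gives $R=-kT$, and the remark immediately following Equation~(\ref{F3-30}) notes that in this case the trace of the energy-momentum tensor vanishes, so $T=0$ and hence $R=0$.

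Substituting $R=0$ into the constant-curvature expression yields $R_{ijkl}=0$ identically, i.e.\ the Riemann curvature tensor vanishes. A space-time with vanishing Riemann tensor is locally isometric to Minkowski space (what the paper is referring to as a ``Euclidean space'' in the flat semi-Riemannian sense), which is the desired conclusion.

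There is essentially no obstacle here; the work was all done by the preceding theorem (constant curvature) and the preceding observation (that $R=0$ for a purely electro-magnetic distribution). The only thing to be careful about is interpreting ``Euclidean'' correctly as flat semi-Riemannian, and making sure to cite Equations~(\ref{F2-30}) and~(\ref{F3-30}) together with the $\mathcal{W}^{\star}$-flat constant-curvature theorem rather than re-deriving them.
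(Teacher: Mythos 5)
Your proposal is correct and is exactly the argument the paper intends (the paper states this corollary without proof as a ``direct consequence''): $\mathcal{W}^{\star}$-flatness gives $R_{ijkl}=\frac{R}{12}\left[ g_{ik}g_{jl}-g_{il}g_{jk}\right]$, the traceless electro-magnetic energy-momentum tensor in Equation (\ref{F2-30}) forces $T=R=0$, and hence $R_{ijkl}=0$. Your reading of ``Euclidean'' as flat (locally Minkowski) is the right one, and you correctly fix the paper's typo in the constant-curvature expression along the way.
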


\begin{corollary}
Let $M$ be a dust fluid $\mathcal{W}^{\star }-$flat space-time satisfying
Equation (\ref{4-F}) (i.e. $T_{ij}=\mu u_{i}u_{j}$). Then $M$ is a vacuum
space-time(i.e. $T_{ij}=0$).
\end{corollary}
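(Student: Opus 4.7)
The plan is to reduce the dust case to a one-line substitution using the preceding theorem on perfect-fluid $\mathcal{W}^{\star}$-flat space-times. First, I would point out that a dust fluid is simply the specialization of a perfect fluid to vanishing isotropic pressure, $p=0$: the stated dust form $T_{ij}=\mu u_i u_j$ is obtained from Equation (\ref{F12}) by setting $p=0$. This observation is what legitimizes applying the previous theorem, since the hypothesis that $M$ is $\mathcal{W}^{\star}$-flat and that $T_{ij}$ has perfect-fluid shape is still in force.

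Next, I would invoke the preceding theorem, which asserts for any perfect-fluid $\mathcal{W}^{\star}$-flat space-time obeying Einstein's field equation that $\mu+p=0$, and moreover that $\mu$ and $p$ are constant. Substituting the dust condition $p=0$ into the equation of state $\mu+p=0$ immediately forces $\mu=0$. Plugging this back into $T_{ij}=\mu u_i u_j$ gives $T_{ij}=0$, which is precisely the vacuum condition.

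I do not expect any real obstacle here; the argument is essentially a direct specialization of the previous theorem. The only conceptual point that must be acknowledged is that ``dust'' is indeed the $p=0$ restriction of the perfect fluid ansatz, so that the equation of state $\mu+p=0$ established earlier applies verbatim and collapses the matter content to zero.
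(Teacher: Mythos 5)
Your argument is correct and is exactly the route the paper intends: the corollary is stated as a direct consequence of the preceding theorem, and specializing the perfect-fluid relation $\mu+p=0$ to the dust case $p=0$ gives $\mu=0$, hence $T_{ij}=\mu u_iu_j=0$.
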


\end{document}